\newtheorem{thm}{Theorem}
\newtheorem{lem}{Lemma}
\newtheorem{prop}{Proposition}
\newtheorem*{conj*}{Conjecture}
\theoremstyle{remark}
\newtheorem{rem}%[thm]
{Remark}
\newtheorem*{rem*}{Remark}
\theoremstyle{definition} %makes theorem-like environments defined after it
\newtheorem{defn}{Definition}
\newtheorem*{defn*}{Definition}
\newcommand{\bb}{\bigskip}
\newcommand{\n}{\noindent}
\newcommand{\e}{\varepsilon}
\newcommand{\p}{\varphi}
\newcommand{\Om}{\Omega}
\newcommand{\R}{\mathbb{R}}
\newcommand{\Rd}{\mathbb{R}^{n_1} \times...\times \mathbb{R}^{n_d}}
\newcommand{\Z}{\mathbb{Z}}
\newcommand{\D}{\mathcal{D}}
\newcommand{\cD}{\mathcal{D}}
\newcommand{\cF}{\mathcal{F}}
\newcommand{\bmo}{{\rm BMO}}
\newcommand{\vmo}{{\rm VMO}}
\newcommand{\cR}{\mathcal{R}}
\newcommand{\ci}[1]{_{ {}_{\scriptstyle #1}}}
\newcommand{\ti}[1]{_{\scriptstyle \text{\rm #1}}}
\def\XXint#1#2#3{{\setbox0=\hbox{$#1{#2#3}{\int}$}
     \vcenter{\hbox{$#2#3$}}\kern-.5\wd0}}
\begin{document}

\title{Weak-star convergence in multiparameter Hardy spaces}
\author{Jill Pipher}
\thanks{The first author  is  supported by the NSF grant DMS-0901139
}
\address{Jill Pipher, Department of Mathematics, Brown University, 151 Thayer
Str./Box 1917,      
 Providence, RI  02912, USA }

 \author{Sergei Treil}
 \thanks{The second author  is  supported by the NSF grant DMS-0800876.
}

\address{Sergei Treil, Department of Mathematics, Brown University, 151 Thayer
Str./Box 1917,      
 Providence, RI  02912, USA }
\email{treil@math.brown.edu}
\urladdr{http://www.math.brown.edu/\~{}treil}

\begin{abstract}
\n We prove a multiparameter version of
a classical theorem of Jones and Journ\'e on weak-star convergence
in the Hardy space $H^1$.
\end{abstract}

\maketitle

\section{Introduction}

It is a well known and classical result that the
commutator of a singular integral with the operator
of multiplication by a function in $\bmo$, the space of bounded
mean oscillation, is bounded in $L^p$, for $1<p< \infty$.  
The first proof appeared in \cite{CRW} (see also \cite{Ja}), and there are
now generalizations of this result to the bidisc and polydisc
(\cite{FL}, \cite{LT}, \cite{LPPW}).
Since $BMO$ is the dual of the Hardy space $H^1$ of functions
whose Poisson maximal function (or square function) belongs to $L^1$,
one can formulate a dual version
of the commutator result. This dual formulation asserts that certain quantities involving products
and sums of Riesz transforms (or more general singular integrals) belong to $H^1$.
For example, if $R_j$ denotes the $j$th Riesz transform, the quantity
$gR_jf + fR_jg$ belongs to $H^1(\R^d)$, whenever $f,g \in L^2(\R^d)$.
Each of the summands in this quantity clearly belongs to $L^1$,
but it is the special form of this sum which puts it into $H^1$.
The space $\vmo$ is the predual of $H^1$, and this gives $H^1$ a richer
structure than $L^1$.

In \cite{CLMS}, a much more general approach was developed.
There, the authors showed that a variety of expressions with
a special form of cancellation (the div-curl quantities) belong to some Hardy space $H^p$.
Their approach paved the way for a striking collection of extensions of the
theory of compensated compactness in partial differential equations.
A result of P. Jones and J.-L. Journ\'e concerning weak convergence in
the Hardy space $H^1$
(\cite{JJ}) was essential.

\medskip

In this paper we prove the multiparamater analog of this theorem. That is,
if $\Rd$ denotes a $d$-parameter product space, where $n_i \geq 1$, we have
the following:

\begin{thm}[Jones - Journ\'e in the multiparameter setting]
\label{thm:weak}
  Suppose that $\{f_n\}$ is a sequence of $H^1(\Rd)$ functions
  such that $\Vert f_n \Vert_{H^1} \leq 1$, for all $n$, and such that
  $f_n(x) \rightarrow f(x)$ for almost every $x \in \Rd$.
  Then $f \in H^1(\Rd)$, $\Vert f \Vert_{H^1} \leq 1$, and $f_n \xrightarrow{w*} f$, i.e. for any
  $\p \in \vmo(\Rd)$,
  $$ \int_{\Rd} f_n \p dx \rightarrow  \int_{\Rd} f\p dx.$$

\end{thm}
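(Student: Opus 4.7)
The plan is to carry over the two-step structure of the classical Jones--Journ\'e argument: first an \emph{a priori} bound $\|f\|_{H^1}\le 1$ obtained from lower semicontinuity, and then a VMO-approximation argument combined with the product $H^1$--$\bmo$ duality to get the weak-$*$ convergence. For the norm bound I would use the product Poisson (or non-tangential) maximal function characterization $\|f\|_{H^1}\sim\|Mf\|_{L^1}$. Since $Mf$ is a supremum of nonnegative linear averages of $|f|$, the pointwise a.e.\ convergence $f_n\to f$ together with Fatou's lemma applied inside each averaging gives $Mf(x)\le\liminf_n Mf_n(x)$ a.e.; a second application of Fatou then yields $\|Mf\|_{L^1}\le\liminf_n\|Mf_n\|_{L^1}\le 1$.

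For the weak-$*$ convergence, fix $\p\in\vmo(\Rd)$ with $\|\p\|_{\bmo}\le 1$ and let $\e>0$. Using that multiparameter $\vmo$ (for whatever definition is adopted as the predual of $H^1$) contains a dense subspace of ``nice'' functions --- say, bounded functions of compact support --- choose $\p_\e$ in this class with $\|\p-\p_\e\|_{\bmo}<\e$. The Chang--Fefferman product $H^1$--$\bmo$ duality then bounds the tail:
\[
\left|\int_{\Rd}(f_n-f)(\p-\p_\e)\,dx\right|\le C\|\p-\p_\e\|_{\bmo}\bigl(\|f_n\|_{H^1}+\|f\|_{H^1}\bigr)\le 2C\e,
\]
uniformly in $n$. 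For the main term $\int_{\Rd}(f_n-f)\p_\e\,dx$, since $\p_\e$ is bounded with compact support $K$, it suffices to prove $\int_K |f_n-f|\,dx\to 0$. Via Vitali's convergence theorem, this will follow from the pointwise convergence together with uniform integrability of $\{f_n\}$ on $K$, which in turn would come from the uniform $H^1$ bound via a local $L(\log L)^k$ estimate for an appropriate exponent $k$.

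The two main technical hurdles are genuinely multiparameter in nature. First, one needs a dense class of nice functions in product $\vmo$: the Carleson counterexample rules out a naive product-of-rectangles atomic picture for $H^1$, so this density depends delicately on the correct formulation of $\vmo(\Rd)$ as the predual of the product $H^1$. Second, the required uniform integrability on compacta has to be extracted from the $H^1$ bound through a multiparameter analog of the Stein $L(\log L)$ estimate, where the product structure enters in the exponent. I expect these two ingredients --- the correct formulation and approximation theory for $\vmo$, and the multiparameter local $L(\log L)^k$ control of $H^1$ functions --- to carry the bulk of the work, with the rest of the argument being essentially the same bookkeeping as in the one-parameter proof.
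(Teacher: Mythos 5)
The key step of your plan fails. Reducing the main term to $\int_K|f_n-f|\,dx\to 0$ via Vitali requires uniform integrability of $\{f_n\}$ on $K$, and a uniform $H^1$ bound does \emph{not} give this: already in one parameter, the atoms $f_n = n\bigl(\chi_{[0,1/n]}-\chi_{[-1/n,0]}\bigr)$ satisfy $\|f_n\|_{H^1}\le C$ and $f_n\to 0$ a.e., yet $\int_K|f_n|\equiv 2$ on any neighborhood $K$ of the origin, so $f_n\not\to 0$ in $L^1_{\mathrm{loc}}$ and no uniform local $L(\log L)^k$ estimate can hold (indeed $\int|f_n|\log^+|f_n|\sim\log n$, unbounded). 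The Stein-type $L\log L$ result you have in mind concerns the Hardy--Littlewood maximal function of $|f|$, which is not controlled by the $H^1$ norm; $H^1$ controls only the smooth (Poisson/grand) maximal function, where cancellation matters. In fact, if your scheme worked it would prove weak convergence against \emph{all} bounded compactly supported test functions, which is strictly stronger than weak-$*$ convergence against $\vmo$ and is false --- the whole point of Jones--Journ\'e is that one must exploit the $\bmo$ structure of the test function. The paper handles exactly this obstruction: it introduces the exceptional set $E_n=\{|f_n-f|>\eta\}$, builds (via the Coifman--Rochberg $\log$ of an $A_1$ weight, here the iterated strong maximal function) an auxiliary function $\tau\in bmo$ with $\tau=1$ on $E_n$, small $bmo$ norm and small support, and then shows that $\|\p\tau\|_{\bmo}$ is small in the \emph{product} $\bmo$ norm; the genuinely multiparameter work is in Lemmas \ref{Lemma:LemmaA} and \ref{Lemma:LemmaB} (a slicing inequality and an induction on the number of parameters for sums over rectangles of small volume inside an arbitrary open set) together with Proposition \ref{prop:averaging} to pass from dyadic product $\bmo$ to product $\bmo$. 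None of this is replaceable by an integrability argument.

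A secondary gap: your Fatou argument for $\|f\|_{H^1}\le 1$ is not correct as stated. The maximal function characterizing $H^1$ is $\sup_t|P_t\ast f|$, not a supremum of averages of $|f|$ (the latter is never in $L^1$ for $f\neq 0$), and $P_t\ast f_n(x)\to P_t\ast f(x)$ does not follow from a.e.\ convergence plus an $L^1$ bound, since mass may concentrate. The clean route, consistent with the paper, is to first prove the weak-$*$ convergence against the dense class $C^\infty_0\subset\vmo$ and then obtain $f\in H^1$, $\|f\|_{H^1}\le 1$, from weak-$*$ lower semicontinuity in $H^1=(\vmo)^*$.
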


The inherent difficulty in working with the multiparameter $\bmo$ and
$\vmo$ spaces is that
the definitions require one to deal with
arbitrary open sets, as opposed to
intervals or products of intervals.

The paper is organized as follows. We recall (in section~\ref{sec:definitions}) some definitions
and the results from prior work
which are required in the proof.
The proof in Section 3 follows the template provided in \cite{JJ}, but
some new ideas, see Lemmas \ref{Lemma:LemmaA} and \ref{Lemma:LemmaB} below, are needed to
get from the one-parameter to multiparameter case.

\section{Definitions}
\label{sec:definitions}

\begin{defn}
A real-valued function $f\in L^1_{\text{loc}}(\Rd)$ is in the space $bmo$ (called
``little bmo" in the literature), if its $bmo$ norm
is finite:
\begin{equation}\label{equation:defbmo}
  \Vert f \Vert_{bmo} := \sup_R \, \frac{1}{|R|} \,
  \int_R \, |f(x) - (f)_R| \, dx < \infty.
\end{equation}
Here $(f)_R = \frac{1}{|R|}\int_R f(x) \, dx$ is the average value of $f$
on the rectangle~$R=Q_1 \times ... \times Q_d \subset \Rd$, where
$Q_i \subset \R^{n_i}$.
\end{defn}

When $d=1$, this is the classical $\bmo$ space, the dual of the
Hardy space $H^1$.
In the multiparamater setting, $bmo$ is one of several possible generalizations of
the one-parameter $\bmo$ space. It is not hard to see that a function
$f$ belongs to $bmo(\Rd)$ if and only if, for all indices $i$, $f$ belongs  to the classical
one-parameter spaces
$\bmo(R^{n_i})$ (with the uniform estimates on the norms), with the other variables fixed.
Moreover, by the John-Nirenberg theorem for the classical one-parameter $\bmo$ space,
the $L^1$ norm in \eqref{equation:defbmo} can be replaced by the $L^p$ norm, $1\le p<\infty$.
In this paper we will use the equivalent norm with $p=2$,
\begin{equation}
\label{equation:defbmo2}    
\Vert f \Vert_{bmo} := \sup_R \,\left(  \frac{1}{|R|} \,
  \int_R \, |f(x) - (f)_R|^2 \, dx \right)^{1/2} < \infty.
\end{equation}

However, the true analog of
$\bmo$ - in the sense of duality with the multiparameter
Hardy space, and boundedness of singular integrals - is the
product $\bmo$ space
which was defined and characterized by
S.-Y. Chang and R. Fefferman (\cite{C} and \cite{CF}).
The space $bmo$ defined above is strictly smaller than this product $\bmo$ space.
(See \cite{FS}.)

\bigskip

The dyadic lattice $\D(\R^n)$
in $\R^n$ is constructed as follows: for each $k\in \Z$ consider the cube $[0,2^k)^n$
and all of its shifts by elements of $\R^n$ whose coordinates are $j2^k$, $j\in \Z$; then take the union over all $k\in \Z$.

\begin{defn}[Expectation and Difference operators]
Let $E_k$ denote the averaging operator over cubes $Q \in \D(R^n)$ of side length $2^k$:
$E_kf(x) = 1/|Q|\int_Q f(y) dy$, if $Q$ has side length $2^k$ and contains $x$.
If $Q$ has side length $2^k$, then $E_Qf(x) = E_kf(x)\chi_E(x)$.
Set $\Delta_k = E_{k-1} - E_k$, and $\Delta_Qf(x) = \Delta_kf(x)\chi_Q(x)$, when
$Q$ has side length $2^k$.
\end{defn}

\begin{defn}[Square functions, dyadic $H^1$]
 For a ``dyadic rectangle'' $R=Q_1 \times ... \times Q_d$, $ Q_i\in \cD(\R^{n_i})$
define the multiparameter difference operator
$\Delta_R = \Delta_{Q_1} \otimes ... \otimes \Delta_{Q_d}$. We use the symbol $\otimes$ to emphasize that the difference operators $\Delta_{Q_i}$ act on independent variables $x_i\in \R^{n_i}$.

\medskip

Here we use the same notation for the one-parameter difference operator and for the multiparameter one;
cubes are always subsets of the ``building blocks'' $\R^{n_i}$, and the ``rectangles'' are the Cartesian products of cubes.
Even if the size of all cubes $Q_i$ is the same, we will call the product $Q_1\times Q_2\times\ldots\times Q_d$ a ``rectangle''.   

Denote by $\cR=\cR ( \Rd )$ the collection of all ``dyadic rectangles''.

The (multiparameter) square function of $f$ in $\Rd$ is defined as
\begin{equation}
Sf(x) = \left(\sum_{R\in\cR} |\Delta_Rf(x)|^2\right)^{1/2}
\end{equation}

A function $f$ belongs to the Hardy space $H^1$ if its norm
$\Vert f \Vert_{H^1} := \Vert Sf \Vert_{L^1}$ is finite.

\end{defn}

\begin{rem}
\label{r1}
Similarly to the one-parameter case for $f\in L^2(\Rd)$
\[
\|f\|^2_2 = \sum_{R\in \cR} \|\Delta_R f \|^2_2 .
\]
This fact is well-known in one-parameter situation:
the general case can be easily obtained by iterating the one-parameter case.
\end{rem}

\begin{defn}[Dyadic product $\bmo$]
  A function $f$ belongs to $ \bmo_d$
  if there exists a constant~$C$ such that for every open set $\Omega \subset \Rd$,
  \begin{equation}\label{eqn:bmodyadic}
    \sum_{R \subset \Omega} \,\Vert \Delta_Rf \Vert_2^2  \leq C|\Omega|.
  \end{equation}
  (See \cite{B}.)
\end{defn}

The dyadic $H^1$ and $\bmo$ spaces can be defined
in terms of a Carleson packing condition using
the product system of Haar wavelets. (See \cite{BP} for
example.)
The same Carleson packing condition, but using
a  basis of smooth wavelets, such as the Meyer wavelets,
defines the product $\bmo$ space, which is the dual of product $H^1$.
We refer to \cite{CF} for the precise definition, and the
duality theorem. Here, we shall only need the following relationship
between product $\bmo$ and its dyadic counterpart, $\bmo_d$:

\begin{prop}\label{prop:averaging}
If $\p$ and all its translates belong to the product $\bmo_d$, with
uniform bounds on their $\bmo_d$ norms, then $\p$ belongs
to the product space $\bmo$.
\end{prop}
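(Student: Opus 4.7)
By the wavelet characterization of continuous product $\bmo$ recalled in the paragraph above, $\p\in \bmo(\Rd)$ is equivalent to the assertion that, for a smooth tensor-product wavelet basis $\{w_R\}_{R\in\cR}$ (for concreteness the Meyer basis, which is smooth, well-localized, and has cancellation in each tensor factor), the Carleson packing condition
\[
\sum_{R\subset \Omega} |\langle \p, w_R\rangle|^2 \;\leq\; C|\Omega|
\]
holds for every open $\Omega\subset\Rd$. The task is therefore to establish this condition for $\p$ with $C$ depending only on $M := \sup_{t}\|\p(\cdot-t)\|_{\bmo_d}$.

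The strategy is an averaging trick in the spirit of Garnett-Jones. For each translation $t\in\Rd$, let $\cD_t:=\cD(\Rd)+t$ denote the translated dyadic lattice and $\{h_S^t\}_{S\in\cD_t}$ its Haar system. A change of variables shows that the hypothesis on the translates of $\p$ is equivalent to saying that, for every $t$, the Haar coefficients of $\p$ on the grid $\cD_t$ satisfy \eqref{eqn:bmodyadic} with the same constant $M$. So I need to transfer this uniform dyadic control, available on every translated grid, into smooth-wavelet control on the original grid. To do so I expand
\[
\langle \p, w_R\rangle \;=\; \sum_{S\in \cD_t} \langle w_R, h_S^t\rangle\,\langle \p, h_S^t\rangle
\]
and use the smoothness and cancellation of $w_R$ in each variable to obtain rapid decay of the matrix $\langle w_R, h_S^t\rangle$ in both the scale ratio and the relative position of $S$ with respect to $R$. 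Averaging the squares of both sides over $t$ in a fundamental domain for the natural scaling action of the dyadic group and applying a Schur-type estimate in the translation parameter yields
\[
\sum_{R\subset \Omega} |\langle \p, w_R\rangle|^2 \;\leq\; C'\!\int\!\sum_{S\subset \Omega'} |\langle \p, h_S^t\rangle|^2 \, dt \;\leq\; C'M^2 |\Omega'|,
\]
where $\Omega'\supset\Omega$ is a mild enlargement with $|\Omega'|\lesssim |\Omega|$, coming from the finite supports (or rapid decay) of the $w_R$. This is the desired bound.

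The main obstacle is that continuous product $\bmo$ is characterized by a Carleson packing over \emph{arbitrary} open sets rather than rectangles, so the one-parameter Garnett-Jones maneuver of ``shifting a dyadic interval to engulf an arbitrary one'' must be performed uniformly across the geometry of $\Omega$. This is exactly what the averaging in $t$ accomplishes: localization of the smooth wavelets restricts the effective interaction to translated dyadic rectangles comparable in scale and position to $R$, which keeps the enlargement $\Omega\mapsto \Omega'$ under measure control and lets the uniform dyadic hypothesis close the estimate.
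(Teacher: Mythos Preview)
The paper does not give a proof of this proposition at all: it simply observes that the statement is a special case of the ``BMO from dyadic BMO'' theorem and cites \cite{PW} and \cite[Remark~0.5]{Tr} (and \cite{D,GJ} for the one-parameter version). Your averaging-over-translated-grids idea is precisely the philosophy behind those cited results, so in spirit you are aligned with the literature the paper defers to, not proposing something different.

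That said, your sketch has real gaps that the cited papers work hard to close, and which you gloss over. First, the left side $\sum_{R\subset\Omega}|\langle\p,w_R\rangle|^2$ does not depend on $t$, so ``averaging over $t$'' by itself does nothing; you must first bound $|\langle\p,w_R\rangle|^2$ pointwise in $t$ by a Schur-type expression in the $|\langle\p,h_S^t\rangle|^2$, and only then average. You indicate this but do not carry it out, and the almost-orthogonality estimates on $\langle w_R,h_S^t\rangle$ in the genuinely multiparameter setting (tensor products, independent scales in each factor) require care. Second, and more seriously, the step ``$\Omega\mapsto\Omega'$ with $|\Omega'|\lesssim|\Omega|$, coming from the finite supports (or rapid decay) of the $w_R$'' is exactly the hard part in product theory. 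Meyer wavelets are not compactly supported, and for \emph{arbitrary} open sets $\Omega$ the enlargement needed to capture all Haar rectangles $S$ that interact nontrivially with some $w_R$, $R\subset\Omega$, is not a priori controlled; one has to organize the tails carefully (or argue on the $H^1$ side and dualize, as in \cite{Tr}). Your last paragraph acknowledges this obstacle but does not overcome it. As written, the proposal is a correct heuristic but not a proof; the substance lies in the references the paper invokes.
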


This statement is trivial in one-parameter settings. In multiparameter situation, it can be treated as a special case of the so-called ``BMO from dyadic BMO'' result (which is a significantly stronger statement), see \cite{PW}, \cite[Remark 0.5]{Tr}.

Namely,   let us consider all translations $\cD_\omega$ of the standard dyadic lattice $\cD$. If we have a measurable family of functions $\p_\omega$, such that each  $\p_\omega$ belongs to $\bmo_d$  with respect to the corresponding lattice $\cD_\omega$ (with the uniform estimate of the norm), then the average (over all $\omega$) of $\p_\omega$ is a BMO function.

Here we do not explain how the average over all $\omega$ is computed, since in our situation $\p_\omega=\p$, so the average is also $\p$; see \cite{PW}, \cite[Remark 0.5]{Tr} for more details.

Note, that the ``BMO from dyadic BMO'' statement is non-trivial even in one-parameter setting, see \cite{D, GJ} for the proof in this case.

\begin{defn}
The product $\vmo$ space is
the closure of the $C^{\infty}$ functions in the product $\bmo$ norm.
\end{defn}

{\bf Remark}. As in the classical one-parameter setting,
the product $\vmo$ space is the predual of $H^1$. (See \cite{LTW}.)

\medskip

\section{Proof of the main result.}

\begin{defn}
If $\Om \subset \Rd$ is an open set, and $x_1 \in \R^{n_1}$, the ``slice"
$\Omega_{x_1}$ is the $(n_2+...+n_d) -$dimensional set:
$$\{x' \in \R^{n_2} \times ... \times \R^{n_d}: (x_1,x') \in \Om \}.$$
The slices $\Omega_{x_i}$, for $i=2,...,n$ are defined similarly.
\end{defn}

Let $\cF \subset\cR$ be a family of ``dyadic rectangles''
 $R=Q_1 \times... \times Q_d$ , $Q_i \in \cD( \R^{n_i})$.
For $x_1\in \R^{n_1}$ let $\cF_{x_1}$ denote the $x_1$ ``slice'' of the family $\cF$, i.e.~the set of all ``rectangles'' $R'= Q_2\times Q_3\times \ldots \times Q_d\subset \R^{n_2}\times\R^{n_3}\times\ldots\times \R^{n_d}$ for which there exists a cube $Q_1\subset \R^{n_1}$ such that $x_1\in Q_1$ and
\[
R = Q_1\times R' = Q_1\times Q_2 \times \ldots \times Q_{d} \in\cF.
\]

\begin{lem}\label{Lemma:LemmaA}
Let $\cF\subset \cR$.
% be a family of ``rectangles '' $R=Q_1 \times... \times Q_d\subset  \R^{n_1} \times \R^{n_2} \times \ldots \times \R^{n_d}$.
Then
\begin{equation}
\label{eq:A}
\sum_{R \in \cF} \Vert\Delta_Rf\Vert^2 \le \int_{\R^{n_1}} \sum_{R' \in \cF_{x_1}} \Vert\Delta_{R'}f(x_1,\,.\,)\Vert^2 dx_1.
\end{equation}
\end{lem}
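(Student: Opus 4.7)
The plan is to factor every $R\in\cF$ as $R=Q_1\times R'$, with $Q_1\in\cD(\R^{n_1})$ and $R'$ a dyadic rectangle in $\R^{n_2}\times\cdots\times\R^{n_d}$, and reduce the inequality to the one-parameter $L^2$ orthogonality identity $\sum_{Q_1\in\cD(\R^{n_1})}\|\Delta_{Q_1}g\|^2=\|g\|^2$ applied in the variable $x_1$ (this is the $d=1$ case of Remark~\ref{r1}). For each slice $R'$, introduce
\[
\G(R'):=\{Q_1\in\cD(\R^{n_1}):Q_1\times R'\in\cF\},\qquad U(R'):=\bigcup_{Q_1\in\G(R')}Q_1\subset\R^{n_1}.
\]
By the very definition of $\cF_{x_1}$, one has $R'\in\cF_{x_1}\iff x_1\in U(R')$, so the right-hand side of \eqref{eq:A} equals $\sum_{R'}\int_{U(R')}\|\Delta_{R'}f(x_1,\cdot)\|^2\,dx_1$ after interchanging the sum and the integral.

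For the left-hand side, using that $\Delta_R=\Delta_{Q_1}\otimes\Delta_{R'}$ acts on independent variables, and using Fubini in $x'=(x_2,\ldots,x_d)$, I would reduce matters to bounding, for each fixed $R'$ and $x'$, the inner sum $\sum_{Q_1\in\G(R')}\|\Delta_{Q_1}h_{x'}\|_{L^2(\R^{n_1})}^2$, where $h_{x'}(x_1):=\Delta_{R'}f(x_1,x')$. The decisive observation is that $\Delta_{Q_1}h_{x'}$ depends only on the restriction of $h_{x'}$ to $Q_1$, and $Q_1\subset U(R')$ for every $Q_1\in\G(R')$, so $\Delta_{Q_1}h_{x'}=\Delta_{Q_1}\bigl(h_{x'}\chi_{U(R')}\bigr)$. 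The one-parameter orthogonality identity applied to $g:=h_{x'}\chi_{U(R')}$ then gives
\[
\sum_{Q_1\in\G(R')}\|\Delta_{Q_1}h_{x'}\|^2\le\sum_{Q_1\in\cD(\R^{n_1})}\|\Delta_{Q_1}g\|^2=\|g\|^2=\int_{U(R')}|h_{x'}(x_1)|^2\,dx_1.
\]
Integrating in $x'$, summing over $R'$, and recombining with the first paragraph yields \eqref{eq:A}.

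The only genuine subtlety is the truncation $h_{x'}\mapsto h_{x'}\chi_{U(R')}$ before invoking orthogonality. Without it, the orthogonality identity would produce integration over all of $\R^{n_1}$ instead of over $U(R')$, and the sharper form of the right-hand side, encoded in the $x_1$-dependent family $\cF_{x_1}$, would be lost. Everything else is bookkeeping through Fubini and the tensor-product structure of $\Delta_R$.
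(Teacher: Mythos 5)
Your proof is correct, but it takes a genuinely different route from the paper's. The paper introduces the auxiliary function $\tilde f(x_1,\cdot)=\sum_{R'\in\cF_{x_1}}\Delta_{R'}f(x_1,\cdot)$, observes that $\Delta_R f=\Delta_R\tilde f$ for every $R\in\cF$, applies Bessel's inequality in $L^2(\Rd)$ to $\tilde f$, and then evaluates $\|\tilde f\|_2^2$ slice by slice using orthogonality of the $\Delta_{R'}$ in the remaining variables; there the localization happens in the $x'$-variables, through the projection that defines $\tilde f$. You instead group the rectangles of $\cF$ by their second factor $R'$, localize in the $x_1$-variable by truncating to $U(R')=\bigcup\{Q_1\in\cD(\R^{n_1}):\,Q_1\times R'\in\cF\}$, and then use only the one-parameter identity $\sum_{Q_1}\|\Delta_{Q_1}g\|^2=\|g\|^2$ (the $d=1$ case of Remark~\ref{r1}) together with the locality of $\Delta_{Q_1}$, namely that $\Delta_{Q_1}g=\Delta_{Q_1}(g\chi_{Q_1})$. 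Both arguments rest on the same orthogonality of the dyadic difference operators, but yours buys a more elementary, self-contained computation: it never constructs $\tilde f$, so questions of its measurability and square-integrability (in the paper one should note that $\{x_1:R'\in\cF_{x_1}\}=U(R')$ is measurable and $\|\tilde f\|_2\le\|f\|_2$) simply do not arise, at the price of somewhat more Fubini bookkeeping; the paper's version is shorter once $\tilde f$ is accepted, since the whole estimate is a single application of Bessel in the full product space. One point you leave implicit, as does the paper: $f$ should be such that the slices $h_{x'}$ are locally integrable (e.g.\ $f\in L^2$), and when $h_{x'}\chi_{U(R')}\notin L^2(\R^{n_1})$ your intermediate bound is trivially true, so nothing is lost there.
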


\medskip

\begin{proof}
For $x_1\in \R^{n_1}$ and $x' \in \R^{n_2}\times\R^{n_3}\times\ldots\times \R^{n_d}$  set
\[
\tilde{f}(x_1,\,.\,) = \sum_{R' \in \cR_{x_1}}  \Delta_{R'}f(x_1,\, . \,) .
\]
Then, for %$R\in\cR$
fixed $x_1\in \R^{n_1}$,
we observe that when $Q_1 \times R' \in \cF$ and $x_1 \in Q_1$, then
\begin{equation}
\label{eq:relate}
\Delta_{R'}f(x_1,\,.\,) = \Delta_{R'}\widetilde{f}(x_1,\,.\,)..
\end{equation}

\n This is because our assumptions $x_1 \in Q_1$ and  
$Q_1 \times R' \in \cF$ mean exactly that $R' \in \cF_{x_1}$.
Thus, using the fact that $\Delta_{Q_1\times R'} =\Delta_{Q_1} \otimes \Delta_{R'}$ we get that
\[
\Delta_{Q_1\times R'} f = \Delta_{Q_1\times R'}\widetilde f
\]
and so
\begin{align*}
\sum_{Q_1 \times R' \in \cF} \Vert\Delta_{Q_1\times R'}f\Vert^2_2 &=
\sum_{Q_1 \times R' \in \cF} \Vert \Delta_{Q_1\times R'}\tilde{f}\Vert^2_2\\
\leq \Vert \tilde{f} \Vert^2_2
&= \int_{\R^{n_1}} \Vert \sum_{R' \in \cF_{x_1}} \Delta_{R'}\widetilde f(x_1,.)\Vert^2_2 dx_1\\
&= \int_{\R^{n_1}} \Vert \sum_{R' \in \cF_{x_1}} \Delta_{R'}f(x_1,.)\Vert^2_2 dx_1\\
&= \int_{\R^{n_1}} \sum_{R' \in \cF_{x_1}} \Vert \Delta_{R'}f \Vert^2_2 dx_1.
\end{align*}

\end{proof}

\bigskip

\begin{lem}\label{Lemma:LemmaB}
Suppose
$\p \in C^{1}(\Rd)$, $\|\p\|_\infty\le1$, $\|\nabla_{x_i} \p(x)\|_{\ell^1}\le1$, $i=1, 2, \ldots, d$, and $b$ is a bounded
function with $\Vert b \Vert_{\infty} \leq 1.$
Then, for any $\alpha < 1$, and any open $\Omega\subset \Rd$,

\begin{equation}\label{equation:induction}
\sum_{R\in \cR: R \subset \Om, |R| \leq \alpha} \Vert\Delta_R(\p b)\Vert^2_2 \leq
2d!(\Vert b \Vert^2_{bmo} + \alpha^{2/n})|\Om|,
\end{equation}
where $n=n_1+ .. + n_d$, and $\Vert f \Vert_{bmo}$ is defined by \eqref{equation:defbmo2}.
\end{lem}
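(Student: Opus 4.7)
The strategy is induction on the number $d$ of parameters, slicing off one variable at a time using Lemma \ref{Lemma:LemmaA}. At every stage I invoke the universal decomposition
\[
\Delta_R(\p b) = \p(x_R)\Delta_R b + \Delta_R\bigl((\p-\p(x_R))b\bigr),
\]
which cleanly separates the contribution of $b$ (yielding the $\|b\|_{bmo}^2$ term through Carleson packing) from the smoothness contribution of $\p$ (yielding the $\alpha^{2/n}$ term via the pointwise bound $|(\p-\p(x_R))b|\le\tfrac12\sum_i\ell(Q_i)$ that follows from $\|\nabla_{x_i}\p\|_{\ell^1}\le 1$ and $\|b\|_\infty\le 1$). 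For the base case $d=1$, the classical one-parameter Carleson packing estimate $\sum_{Q\subset\Omega}\|\Delta_Q b\|_2^2 \le \|b\|_{bmo}^2|\Omega|$ (Bessel inside each maximal dyadic cube of $\Omega$) handles the first sum, while $|\Delta_Q((\p-\p(x_Q))b)|\le\ell(Q)$ on $Q$ combined with a geometric sum in the scale handles the second.

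For the inductive step $d\ge 2$, I partition $\cF=\{R\subset\Omega:|R|\le\alpha\}$ using the thresholds $|Q_j|=\alpha^{n_j/n}$: for $j=1,\dots,d$ let $\cF_j$ consist of those $R\in\cF$ for which $j$ is the first index with $|Q_j|\ge\alpha^{n_j/n}$. On $\cF_j$ the constraint $|R|\le\alpha$ forces the $(d-1)$-dimensional slice $R^{(j)}=\prod_{i\ne j}Q_i$ to satisfy $|R^{(j)}|=|R|/|Q_j|\le\alpha^{(n-n_j)/n}<1$, so Lemma \ref{Lemma:LemmaA} applied in $x_j$ reduces the sum over $\cF_j$ to the inductive hypothesis in $d-1$ parameters with $\alpha'=\alpha^{(n-n_j)/n}$. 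The arithmetic identity $(\alpha')^{2/(n-n_j)}=\alpha^{2/n}$ reproduces the required exponent; together with the fixed-$x_j$ slice bound (the slice of $b$ in the remaining variables satisfies $\|\cdot\|_{bmo(\R^{n-n_j})}\le\|b\|_{bmo(\Rd)}$, a consequence of the characterization of little bmo via classical $\bmo$ in each variable) and Fubini, each $\cF_j$ contributes $2(d-1)!(\|b\|_{bmo}^2+\alpha^{2/n})|\Omega|$; summing over $j$ yields the required $d\cdot 2(d-1)!=2d!$.

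The main obstacle is the residual family of $R\in\cF$ with $|Q_j|<\alpha^{n_j/n}$ for \emph{every} $j$, where no slicing-plus-hypothesis strategy is available. All sides are then short, $\ell(Q_j)<\alpha^{1/n}$, so the smoothness of $\p$ gives a pointwise bound $|(\p-\p(x_R))b|\le d\alpha^{1/n}/2$; however, the naive per-$R$ Bessel sum $\sum_R\|(\p-\p(x_R))b\,\chi_R\|_2^2$ diverges from overcounting at arbitrarily small scales. I plan to instead exploit Haar orthogonality by telescoping $\p-\p(x_R)=\sum_{i=1}^d\psi_R^{(i)}$ with $\psi_R^{(i)}$ depending only on $x_1,\dots,x_i$ and $|\psi_R^{(i)}|\le\ell(Q_i)/2$. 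The independence of $\psi_R^{(i)}$ from $x_{i+1},\dots,x_d$ lets $\Delta_R(\psi_R^{(i)}b)$ factor, and a further iterated application of Lemma \ref{Lemma:LemmaA} converts the residual estimate into a controlled $\alpha^{2/n}|\Omega|$ contribution. The key technical point is matching this Haar-orthogonality bookkeeping to the telescoping structure so that no spurious divergences appear at small scales.
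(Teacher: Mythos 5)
Your overall strategy---induction on $d$, slicing via Lemma \ref{Lemma:LemmaA}, the exponent arithmetic $(\alpha^{(n-n_j)/n})^{2/(n-n_j)}=\alpha^{2/n}$, the observation that slices of $b$ stay in $bmo$, and the count $d\cdot 2(d-1)!=2d!$---is exactly the paper's, and your base case is essentially the paper's computation. The gap is in the inductive step: the ``residual family'' you single out, namely those $R$ with $|Q_j|<\alpha^{n_j/n}$ for \emph{every} $j$, is not an obstacle at all, and the Haar-telescoping remedy you sketch for it is both unnecessary and not actually carried out (it is stated only as a plan, with the key bookkeeping explicitly left open). Indeed, if $|Q_i|<\alpha^{n_i/n}$ for all $i$, then for every $j$
\[
|R^{(j)}|=\prod_{i\ne j}|Q_i|<\prod_{i\ne j}\alpha^{n_i/n}=\alpha^{(n-n_j)/n},
\]
so these rectangles satisfy the slice condition for every index and can simply be assigned to, say, $\cF_1$; your partition then covers all of $\cF$ with $d$ families and the induction closes exactly as you describe for the non-residual part. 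The paper handles this in one stroke with a pigeonhole argument: since $|R|^{d-1}=\prod_{i=1}^d|R_i'|$, if every face satisfied $|R_i'|\ge\alpha^{(n-n_i)/n}$ one would get $|R|^{d-1}\ge\alpha^{d-1}$, contradicting $|R|<\alpha$; hence $\cF=\bigcup_{i=1}^d\cF^i$ with $\cF^i=\{R\in\cF:|R_i'|<\alpha^{(n-n_i)/n}\}$, and each $\cF^i$ is treated by Lemma \ref{Lemma:LemmaA} plus the inductive hypothesis, exactly as in your non-residual case.

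A smaller point: your identity $\Delta_R(\p b)=\p(x_R)\Delta_Rb+\Delta_R\bigl((\p-\p(x_R))b\bigr)$ is valid and at $d=1$ reproduces the paper's splitting (the paper uses the average $\p_{Q_0}$ in place of the point value $\p(x_{Q_0})$, which is immaterial). But invoking ``Carleson packing'' $\sum_{R\subset\Omega}\|\Delta_Rb\|_2^2\lesssim\|b\|_{bmo}^2|\Omega|$ directly for $d\ge2$ amounts to asserting that little $bmo$ embeds in product dyadic $\bmo$, which is true but itself requires an argument (essentially an iteration of Lemma \ref{Lemma:LemmaA}). In the inductive scheme you only ever need the one-parameter version, so the decomposition should be confined to the base case, as the paper does.
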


\begin{proof}
The proof is by induction on $d$:
the base case for one-parameter BMO ($d=1$) was proven in \cite{JJ}, but we'll give the
short argument here for the sake of completeness.

When $d=1$, it suffices to prove (\ref{equation:induction}) for $\Omega = Q_0$, where
$Q_0 \subset \R^{n_1}$ is a dyadic cube. The ``rectangles"' $R$ are themselves dyadic cubes (which
we now denote by $Q$), and
by subdividing $Q_0$ into smaller dyadic cubes if necessary, we may without loss of generality
assume that $|Q_0| \leq \alpha$. Then  we see that
\begin{eqnarray*}
    \sum_{Q \subset Q_0} \Vert\Delta_Q(\p b)\Vert^2_2
    & = &
    \int_{Q_0} \, |\p(x) b(x) - (\p b)_{Q_0}|^2 \, dx \\
    & \le &
    \int_{Q_0} \, |\p(x) b(x) - \p_{Q_0} b_{Q_0}|^2 \, dx \\
    & \le & 2\left( \int_{Q_0} \, |\p(x) b(x) - \p(x) b_{Q_0}|^2 \, dx  
%    \right.
%    \\
%    & & \qquad \qquad\qquad \left.
    + \int_{Q_0} \, |\p(x) b_{Q_0} - \p_{Q_0} b_{Q_0}|^2 \, dx \right)
\end{eqnarray*}

\n On the one hand, by the pointwise bound on $\p$,
\[
\int_{Q_0} \, |\p (x) b(x) - \p(x) b_{Q_0}|^2 \, dx \leq \Vert b \Vert_{bmo}^2 |Q_0|,
\]
and using the pointwise bounds on $b$ (so $|b_{Q-0}|\le 1$) and on derivatives of $\p$ (so $|\p(x) - \p_{Q_0}|\le (\alpha)^{1/n_1}$ for $x\in Q_0$), we get
\[
\int_{Q_0} \, |b_{Q_0}(\p(x) - \p_{Q_0})|^2 \, dx \leq (\alpha)^{2/n_1}|Q_0|.
\]
Combining the above 3  estimates we get \eqref{equation:induction} with $d=1$.

For the induction step,
we'll use the notation $\R^{n_1} \times ... \widehat{\R}^{n_i} \times ... \R^{n_d}$
to denote the $d-1$ fold product of
the $\Rd$ with $\R^{n_i}$ missing, and similar notation for a $d-1$ fold
product of cubes with one cube $Q_i$ missing.

Suppose now that $R = Q_1 \times ... \times Q_d$ is a rectangle
in $\Rd$ with $|R| < \alpha$. Then there exists an $i$ such that
the $d-1$ dimensional rectangle $R_i'=Q_1 \times ...\times...\hat{Q_i}... \times Q_d$
has volume $|R'_i| < \alpha^{N_i}$, where
$N_i = (n_1+...\hat{n_i}+...+n_d)/(n_1+...+n_d)$.
Indeed,  if not,
\[
|R|^{d-1} = \prod_{i=1}^d |R'_i|
 >  \prod_{i=1}^d  \alpha^{N_i} = \alpha^{d-1}
\]
contradicting the assumption $|R|<\alpha$.

Thus each ``rectangle'' $R \subset \Om$, $|R|<\alpha$ satisfies this condition for at least one index $i =1,...,d$. Therefore, the collection $\cF=\{R\in \cR: R\subset \Omega, |R|<\alpha\}$ can be represented as a union $\cF = \cup_{i=1}^d \cF^i$, where $\cF^i:= \{ R\in \cF: R_i'<  \alpha^{N_i} \}$. (Note, that $\cF^i$s are not necessarily disjoint.)

Applying Lemma \ref{Lemma:LemmaA} (with $x_1$ replaced by $x_i$) to each collection $\cF^i$, we see that
\begin{equation*}
\sum_{R \in \cF} \Vert\Delta_R(\p b)\Vert^2_2 \le
\sum_{i=1}^{d}\int_{\R^{n_i}} \sum_{R' \in \cF^i_{x_i}} \Vert\Delta_{R'}(\p b)(x_i,\,.\,)\Vert^2_2 dx_i .
\end{equation*}
Note that $\cF^i_{x_i} \subset \{R'\in \cR(\R^{n_1} \times ... \widehat{\R}^{n_i} \times ... \R^{n_d}): R'\subset  \Omega_{x_i}, |R'|\le \alpha^{N_i} \}$, so
by the induction step with $\tilde{n_i} = n_1+...n_{i-1}+n_{i+1}+...+n_d)$ instead of $n$ and $d-1$ instead of $d$, we get
\begin{eqnarray*}
\int_{\R^{n_i}} \sum_{R' \in \cF^i_{x_i}, |R'| < \alpha^{N_i}} \Vert \Delta_{R'}(\p b)(x_i,.)\Vert^2_2
&\le & 2 (d-1)!
\int_{x_i} (\Vert b \Vert_{bmo}^2 + (\alpha^{N_i})^{2/\tilde{n_i}}) |E_{x_i}| dx_i\\
&=& 2(d-1)!(\Vert b \Vert_{bmo}^2 + \alpha^{2/n}) |\Om|.
\end{eqnarray*}

Here we have also used the (trivial) fact that the $bmo(\R^{n_1} \times ... \widehat{\R}^{n_i} \times ... \R^{n_d})$
norm of $b$ is bounded
by $\Vert b \Vert_{bmo(\Rd)}$.

Adding estimates for all $i=1, 2, \ldots, d$ we get the conclusion of the lemma.
\end{proof}

\bigskip

We will require the following fact about $bmo$ functions.

\begin{lem}\label{lem:max}
If $f$ and $g$ belong to $bmo$, then $\text{max}\{f,g\}$ also belongs to $bmo$.
\end{lem}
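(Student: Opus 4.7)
The plan is to exploit the elementary identity
\[
\max\{f,g\} \;=\; \frac{f+g}{2} \;+\; \frac{|f-g|}{2},
\]
together with the fact that $bmo$ is a normed linear space. Since $\|f+g\|_{bmo}\le\|f\|_{bmo}+\|g\|_{bmo}$ follows immediately from the triangle inequality applied inside the definition \eqref{equation:defbmo}, the whole question reduces to showing that $bmo$ is closed under taking absolute values, i.e.\ that $|h|\in bmo$ whenever $h\in bmo$, with a controlled norm bound.

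To prove this, I would argue directly from the definition of $\|\cdot\|_{bmo}$ in terms of product rectangles $R=Q_1\times\ldots\times Q_d$. Fix such an $R$ and let $c=|h_R|$. The standard two-step comparison of the mean oscillation with oscillation around an arbitrary constant gives
\[
\frac{1}{|R|}\int_R \bigl| |h(x)| - (|h|)_R \bigr|\,dx
\;\le\; \frac{2}{|R|}\int_R \bigl| |h(x)| - c\bigr|\,dx,
\]
and then the pointwise inequality $\bigl||h(x)|-|h_R|\bigr|\le |h(x)-h_R|$ gives
\[
\frac{2}{|R|}\int_R \bigl| |h(x)| - |h_R| \bigr|\,dx
\;\le\; \frac{2}{|R|}\int_R |h(x)-h_R|\,dx
\;\le\; 2\|h\|_{bmo}.
\]
Taking the supremum over $R$ yields $\||h|\|_{bmo}\le 2\|h\|_{bmo}$.

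Combining the two reductions, I obtain
\[
\|\max\{f,g\}\|_{bmo}
\;\le\; \tfrac12\bigl(\|f\|_{bmo}+\|g\|_{bmo}\bigr) + \tfrac12\||f-g|\|_{bmo}
\;\le\; \tfrac32\bigl(\|f\|_{bmo}+\|g\|_{bmo}\bigr),
\]
which proves the lemma. There is no real obstacle here: the argument is essentially the classical one-parameter reasoning, and what makes it go through verbatim in the multiparameter $bmo$ setting is that the definition \eqref{equation:defbmo} already involves only product rectangles and their scalar averages, so no manipulation of the multiparameter geometry is required.
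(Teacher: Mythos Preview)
Your argument is correct and follows essentially the same route as the paper: the identity $\max\{f,g\}=\tfrac12(f+g)+\tfrac12|f-g|$ together with the observation that the rectangular oscillation of $|h|$ is controlled by that of $h$. The paper states the latter more tersely (and without tracking the constant), but the substance is identical.
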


\begin{proof}
The proof is exactly as in the one-parameter setting, since the space
$bmo$ is defined by averages over rectangles. That is,
for any rectangle $R$, we have
\begin{equation*}
\frac{1}{|R|} \int_R |\, |f(x)| - |f|_R \,|dx \leq \frac{1}{|R|} \int_R \,|f(x) - f_R| \, dx
\end{equation*}
and $\max\{f,g\} = (|f-g| + f + g)/2$.  
\end{proof}

\bigskip

\begin{lem}\label{lem:littlebmo}
Let $E \subset \Rd$ be a set of finite measure, and
let $\delta > 0$ be a given parameter. Then there exists
a function $\tau \in bmo$ such that
$\tau = 1$ on $E$,
$\Vert \tau \Vert_{bmo} < C_1 \delta$, and
$|\operatorname{supp}\tau | < C_2 e^{2/\delta} |E|$, where $C_1$ and $C_2$ are some absolute constants. 
\end{lem}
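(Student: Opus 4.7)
The plan is to mimic the one-parameter Jones–Journ\'e construction, which for $d = 1$ sets
\[
\tau(x) = \max\{0,\, 1 + \delta \log M\chi_E(x)\}
\]
with $M$ the Hardy–Littlewood maximal operator. That proof rests on three ingredients: (a) the classical estimate $\|\log M f\|_{\mathrm{BMO}} \leq C$ with an absolute constant, so that multiplying by $\delta$ produces a small BMO norm; (b) the weak-type $(1,1)$ bound for $M$, which controls the size of $\{\tau > 0\} = \{M\chi_E > e^{-1/\delta}\}$; and (c) the identity $M\chi_E = 1$ a.e.~on $E$, giving $\tau = 1$ there.

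In the multiparameter setting I would try the same formula with $M$ replaced by an appropriate multiparameter maximal operator $M^*$, a natural first candidate being the strong maximal function $M_s$ (supremum of averages over axis-aligned rectangles). The identity $\tau = 1$ a.e.~on $E$ still follows from $M^*\chi_E = 1$ a.e.\ there. For the support bound, one uses the $L(1+\log L)^{d-1}$ weak-type inequality $|\{M_s\chi_E > \lambda\}| \leq C\lambda^{-1}(1+\log^+\lambda^{-1})^{d-1}|E|$: the polynomial factor $(1/\delta)^{d-1}$ is absorbed into the exponential by enlarging the exponent from $1/\delta$ to $2/\delta$, yielding $|\operatorname{supp}\tau| \leq C_2\, e^{2/\delta}|E|$ (the case of $\delta$ bounded away from $0$ is trivial, since $\chi_E$ itself works).

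The main obstacle is the $bmo$ estimate $\|\tau\|_{bmo} \leq C_1\delta$. By Lemma~\ref{lem:max} together with the identity $\max\{f,0\} = (|f|+f)/2$ (and the one-line inequality appearing in the proof of Lemma~\ref{lem:max} that shows $\||f|\|_{bmo} \leq \|f\|_{bmo}$), adding a constant and truncating below at $0$ preserve the $bmo$ norm up to absolute factors, so it suffices to show that $\log M^*\chi_E$ lies in $bmo(\Rd)$ with norm bounded independently of $E$. Exploiting the characterization of little $bmo$ as the intersection of classical one-parameter $\mathrm{BMO}$ spaces in each variable separately (with the remaining variables frozen, uniformly), this reduces to a one-dimensional claim: for every $i$ and every fixed $(x_1,\ldots,\hat{x}_i,\ldots,x_d)$, the slice $x_i \mapsto \log M^*\chi_E$ must lie in $\mathrm{BMO}(\R^{n_i})$ with a uniform bound. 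This slicing step is the delicate part of the argument and will drive the choice of $M^*$: for $M_s$ the slices mix all variables and the control is subtle, whereas for an iterated one-dimensional operator (for instance $M^{(1)}\cdots M^{(d)}$) each slice can be bounded pointwise by a one-dimensional maximal function, whereupon the one-parameter estimate $\|\log Mf\|_{\mathrm{BMO}} \leq C$ delivers the required BMO bound slice-by-slice.
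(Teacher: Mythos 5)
Your overall architecture (truncate $1+\delta\log(\cdot)$ at zero via Lemma~\ref{lem:max}, get $\tau=1$ on $E$ from the maximal function being $1$ a.e.\ on $E$, control $|\operatorname{supp}\tau|$ by estimating the level set $\{M^*\chi_E>e^{-1/\delta}\}$) is sound, but the crucial step --- the uniform bound $\|\log M^*\chi_E\|_{bmo}\le C$ --- is not established, and the justification you offer for it does not work. A pointwise upper bound of a slice by a one-dimensional maximal function says nothing about the BMO norm of its logarithm (any function bounded by $1$ and vanishing off $E$ is dominated pointwise by $M\chi_E$, yet its logarithm can be arbitrarily bad). Moreover, for the iterated operator $M^{(1)}\cdots M^{(d)}$ only the slice in the variable of the \emph{outermost} maximal operator is genuinely a one-dimensional maximal function of a fixed function; in each inner variable the slice is a supremum, over cubes in the other variables, of averages of one-dimensional maximal functions, and neither ``$\log$ of a supremum of maximal functions is uniformly BMO'' nor ``a supremum of uniformly $A_1$ weights is $A_1$'' is available (averages of $A_1$ weights behave well, suprema do not), and the composition is not symmetric in the variables, so you cannot make every variable the outermost one. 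The same obstruction is what makes the strong maximal function route ``subtle'': a Coifman--Rochberg-type statement for $M_s$ is exactly what is missing, so as written the $bmo$ estimate, which is the heart of the lemma, has a genuine gap.

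The paper sidesteps precisely this difficulty by not taking the logarithm of a maximal function at all: it builds a Rubio de Francia-type weight
\begin{equation*}
m(x)=K^{-1}\sum_{k\ge 0}c^k M^{(k)}\chi_E(x),
\end{equation*}
where $M^{(k)}$ is the $k$-fold iterate of the strong maximal function and $c$ is chosen so that $c\|M\|_{L^2\to L^2}<1$. By construction $M m\lesssim c^{-1}m$, so $m$ is an honest multiparameter $A_1$ weight; then each slice of $m$ is uniformly one-parameter $A_1$, whence $\log m\in bmo$ with an absolute bound, and $\tau=\max\{0,1+\delta\log m\}$ has $\|\tau\|_{bmo}\lesssim\delta$ by your same truncation argument. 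Note also that the support bound then comes not from the $L(\log L)^{d-1}$ weak-type inequality but simply from Chebyshev together with $\|m\|_{L^2}\le C|E|^{1/2}$, which follows from the $L^2$-boundedness of the strong maximal function and the choice of $c$ (and $m\le 1$ off $E$, $m=1$ a.e.\ on $E$). If you replace $\log M^*\chi_E$ by $\log m$ in your argument, the rest of your proposal goes through.
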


\begin{proof}
Recall that a weight $w$ belongs to the $A_1$ class if
there exists a constant $C$ such that for all $x$, $Mw(x) \leq Cw(x)$.
Here, if $M$ is the Hardy-Littlewood maximal function then this
is the usual class of (one-parameter) Muckenhoupt weights. And if
$M$ is the strong maximal function where the averages are taken
over arbitrary rectangles in $\Rd$, then this is the multiparamater
$A_1$ class. (See \cite{FP} for some basic facts about product $A_p$ weights.)

We define the following $A_1$ weight, with $M^{(k)}$ denoting the
k-fold iteration of the strong
maximal function:
\begin{equation*}
m(x) = K^{-1}\sum_{k=0}^{\infty} c^k M^{(k)}\chi\ci E(x).
\end{equation*}
\n where $K=\sum_k c^k$, and $c>0$ is chosen to insure the convergence of the series. Namely, we chose $c$ such that $\|cM\|<1$, i.e.~that for some $q<1$, we have $c \|M f \|_2 \le q\|f\|_2$ for all $f\in L^2$.  

Then  $\Vert m \Vert\ci{L^2} \leq C \|\chi\ci E\|\ci{L^2} = C |E|^{1/2}$.
Observe that $m=1$ a.e. on $E$, and $m \leq 1$ a.e. outside of $E$.

\bigskip

Define, as in \cite{JJ}, following \cite{CR}, the function
\begin{equation*}
\tau(x) = \max\{0, 1 + \delta \log m(x) \}.
\end{equation*}

The function $\tau$ belongs to $bmo$ and also satisfies $\tau = 1$ a.e. on $E$.
However, $\tau$ has small $bmo$ norm: $\Vert \tau \Vert_{bmo} \lesssim \delta$.
This follows from Lemma \ref{lem:max} and the fact that for any $A_1$ weight $w$, $\log w$ belongs to $bmo$,
which is proved exactly as in the one-parameter setting. (See, for example, \cite{G}.)
 
The estimate for the size of the support of $\tau$ follows from
Tchebychev's theorem and the  estimate $\Vert m \Vert\ci{L^2} \leq  C |E|^{1/2}$.   
\end{proof}

\bb

We now prove Theorem \ref{thm:weak}.
\begin{proof}[Proof of Theorem \ref{thm:weak}]
First notice, that since $C^{\infty}_0$ is dense in VMO, it is sufficient to prove Theorem \ref{thm:weak} for $\p\in C^{\infty}_0$. 

Because $f_n \rightarrow f$ a.e., and $\Vert f_n \Vert_{H^1} \leq 1$, we have
$\Vert f \Vert|_{L^1} \leq 1$ by Fatou's Lemma. Choose a $\p \in C^{\infty}_0$,
normalized to have $|\p| \leq 1$, $\Vert \p \Vert_{L^1} \leq 1$.
Let $\e > 0$ be fixed.

We need to show that, for $n$ sufficiently large,
\begin{equation}\label{eqn:estimate}
\Bigl|\int f_n \p dx  -  \int f \p dx \Bigr| < C\e,
\end{equation}
where $C$ is some absolute constant. 

We now fix an $\eta$ to be determined, and define
\begin{equation*}
E_n = \{x \in \text{supp}\,\p: |f_n(x) - f(x)| > \eta \}
\end{equation*}

Choose $n$ sufficiently large that $|E_n| < \eta$.
Define $\tau$ as in Lemma \ref{lem:littlebmo}, relative to the set $E_n$. 
%and set $T = \{\tau > 0\}$.
Then, if $\eta$ is chosen sufficiently small, since $\|f\|_{L^1}\le 1$, we will have
\begin{equation*}
\left|\int_{\operatorname{supp}\tau} f dx < \e \right|  .
\end{equation*}

Then we break up the integral in (\ref{eqn:estimate}) as
\begin{equation}\label{eqn:sum}
\left| \int (f-f_n) dx \right| \leq \left| \int (f-f_n)\p(1-\tau) dx \right|
+ \left|\int(f-f_n)\p\tau dx\right| .
\end{equation}

In the complement of $E_n$, $|f-f_n| < \eta$, so the first integral
on the left hand side of (\ref{eqn:sum})
is bounded by $\eta \Vert \p \Vert_{L^1}$, which in turn is less than
$\e$ if $\tau$ is appropriately small.

The second integral in (\ref{eqn:sum}) is bounded by
\begin{equation*}
\int_T |f\p| dx + \left| \int f_n \p \tau dx\right|, 
\end{equation*}
 and since
\begin{equation*}
\int_T |f\p| dx < \e , 
\end{equation*}
the proof is completed by showing that $\Vert \p \tau \Vert_{\bmo} \lesssim \e$.

We will show that the dyadic $\bmo$ norm of $\p \tau$ has the required estimate, and observe
that the same proof shows that any translate of $\p \tau$ is in dyadic $\bmo$ with the
same bound. The estimate on the product $\bmo$ norm will follow from
Proposition \ref{prop:averaging}.

Fix an arbitrary open set $\Om \subset \Rd$ and consider two cases:

\medskip

\n (i) $|\Om| \leq \alpha$, where $\alpha>0$ will be chosen in a moment. 

\n In this case, all rectangles contained in $\Om$ have size less than $\alpha$,
and Lemma \ref{Lemma:LemmaB} gives
\begin{equation*}
\sum_{R \subset \Om, |R| \leq \alpha} \Vert\Delta_R(\p \tau)\Vert^2_2 \leq
C(\Vert \p \Vert^2_{bmo} + \alpha^{2/n})|\Om|,
\end{equation*}
for $n=n_1+...+n_d$. With the appropriate choice of  $\alpha$ and $\delta$ from Lemma \ref{lem:littlebmo} (so $\Vert \p \Vert^2_{bmo} \le \delta^2$),
this will be smaller than $\e |\Om|$. 

Note that $\eta$ does not appear in the above estimate, so it holds for all $\eta$ ($\eta$ appears in the estimate of $|\operatorname{supp}\tau|$, but we do not use this quantity in the estimate).

\pagebreak
%So, let us assume that we fixed $\alpha$ and $\delta$. 

\medskip

\n (ii) $|\Om| > \alpha$ ($\alpha$ and $\delta$ are already chosen). 

\n In this case, using the estimates $\| \p\tau\|_\infty\le 1$ and $|\operatorname{supp}\tau| \le C_2 e^{2/\delta} \eta $, we get
\begin{eqnarray*}
\sum_{R \subset \Om} \Vert\Delta_R(\p \tau)\Vert^2_2 &\leq& \int |\p \tau|^2 dx\\
&\leq& \Vert \p \tau \Vert_{\infty}^2 |\operatorname{supp}\tau|\\
&\leq& C_2 \eta e^{2/\delta} \\
&\leq& C_2 \eta e^{2/\delta} \frac{|\Om|}{\alpha}, 
%&<& \e |\Om|
\end{eqnarray*}
and the last quantity is bounded by $\e |\Om|$ if  $\eta$ is small enough.
\end{proof}

\begin{rem*}
 Note that the multiparameter version of the Jones-Journ\'e theorem cannot be obtained by \emph{trivial} iteration of the original one-parameter version. Namely, the function $\p\tau$ in the proof does not have small {\it bmo} norm, so we need to use the norm of the product $\bmo$. Lemmas \ref{Lemma:LemmaA} and \ref{Lemma:LemmaB} are necessary to perform this iteration.  
\end{rem*}

\begin{rem*}
As it was mentioned in \cite{JJ}, it is easy to see that the analogue of the main result does not hold for $L^1$ functions: it is easy to construct a sequence of $L^1(\R^N)$ functions $f_n$ converging (in the weak* topology of the space of measures $M(\R^N)$) to a singular measure and such that $f_n\to0$ a.e.

Moreover, picking a sequence of discrete measures $\mu_n$, converging (in the weak* topology of $M(\R^N)$) to a given $f\in L^1(\R^N)$, and then approximating the measures $\mu_n$ by absolutely continuous measures with densities $f_n$ (recall that the weak* topology of $M(\R^N)$ is metriziable on any bounded set),  we get that $f_n \xrightarrow{w*} f$ in $M(\R^N)$. One can definitely pick a sequence $f_n$ such that  $f_n\to0$ a.e., which gives us even more striking counterexample.

On the other hand, the analogue of Theorem \ref{thm:weak} holds for any reflexive function space $X$ of locally integrable functions (so convergence in $X$ implies the convergence in $L^1\ti{loc}$). Namely, if $\sup_n\Vert{f_n}\Vert<\infty$ and  $f_n\to f$ a.e., then $f_n\to f$ in the weak (which is the same as weak*) topology of $X$. This is a simple exercise in basic functional analysis, we leave the details to the reader.

The space $H^1$ however is not reflexive: it is only a dual (of VMO). So, maybe an analogue of Theorem \ref{thm:weak} is true for any space of function which is dual to some space. It would be interesting to prove or disprove the following conjecture.
\end{rem*}

\begin{conj*}
Let $X$ be a Banach function space $X$ of locally integrable functions (so convergence in $X$ implies the convergence in $L^1\ti{loc}$), which is dual to some Banach function space $X_*$ (with respect to the natural duality). If $f_n\in X$ such that $\sup_n \|f_n\|<\infty$ and $f_n\to f$ a.e., then $f\in X$ and $f_n\to f$ in weak* topology of $X$.
\end{conj*}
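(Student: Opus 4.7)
The plan is to combine Banach--Alaoglu weak-$*$ compactness with an a.e.\ uniqueness argument, in the spirit of the classical duality-plus-Fatou approach. Since the embedding $X\hookrightarrow L^1\ti{loc}$ is continuous by hypothesis, the uniform bound $\sup_n\|f_n\|_X<\infty$ yields uniform $L^1(K)$ bounds on every compact set $K$, and this is the only quantitative input beyond pointwise convergence.

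First I would extract a weak-$*$ convergent subsequence. Assuming $X_*$ is separable (so that closed balls of $X$ are weak-$*$ sequentially compact), Banach--Alaoglu produces a subsequence $f_{n_k}\xrightarrow{w*}g$ for some $g\in X$, with $\|g\|_X\le \liminf_k\|f_{n_k}\|_X\le \sup_n\|f_n\|_X$. Once $g=f$ is established, this simultaneously gives $f\in X$ and the required norm bound.

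Next I would identify $g$ with $f$ by pairing against a sufficiently rich family in $X_*$. The natural candidates are bounded functions of compact support, since the ``natural duality'' hypothesis should force these to lie, at least densely, in $X_*$. For such $\varphi$ with $\supp\varphi=K$, weak-$*$ convergence gives $\int f_{n_k}\varphi\to\int g\varphi$; if one can independently show $\int f_{n_k}\varphi\to\int f\varphi$, then $\int(f-g)\varphi=0$ on a dense family of test functions and hence $f=g$ a.e. A standard subsequence-of-subsequence argument would then upgrade this to weak-$*$ convergence of the full sequence.

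The hard part is precisely the step $\int f_{n_k}\varphi\to\int f\varphi$: a.e.\ convergence together with a bound in $X$ does not by itself yield convergence of the integrals, as the counterexample $n\chi\ci{[0,1/n]}$ in $L^1$ shows; the whole point of excluding $L^1$ from the conjecture is that $L^1$ is not a dual, and the hope is that being a dual of $X_*$ rules out this kind of concentration. A plausible route is a truncation argument: set $f_n^N:=f_n\mathbf 1\ci{\{|f_n|\le N\}}$, handle the bounded truncations on $K$ by dominated convergence, and then pass to the limit $N\to\infty$ via a uniform-in-$n$ estimate on $\|f_n-f_n^N\|\ci{L^1(K)}$. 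Whether such equi-integrability on compact sets can be extracted from ``$X$ is a dual Banach function space'' alone, or whether it forces an additional hypothesis on the predual $X_*$ (for instance order-continuity of its norm, which would produce the required uniform smallness of tails), is the real crux of the conjecture and the step I expect to be the main obstacle.
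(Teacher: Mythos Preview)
This statement is a \emph{conjecture} in the paper, not a theorem: the authors pose it as an open problem and give no proof. So there is no argument of theirs to compare against. Your outline accurately reproduces the natural first attempt (Banach--Alaoglu plus identification of the weak-$*$ limit), and you have located exactly the gap that prevents this from going through: from a.e.\ convergence and a uniform $X$-bound one cannot in general pass to $\int f_{n_k}\varphi\to\int f\varphi$ for $\varphi\in X_*$, and nothing in the abstract hypothesis ``$X$ is a dual Banach function space'' is known to supply the needed equi-integrability. The paper makes the same point with the $L^1$ examples in the remark immediately preceding the conjecture, and explicitly invites additional ``reasonable'' hypotheses (e.g.\ density of $C_0^\infty$ in $X_*$) if they help.

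In short: you have not missed an idea that the authors had; you have rediscovered why they left it as a conjecture. If you want to push further, the honest next step is exactly the one you flagged --- isolate a structural hypothesis on $X_*$ (order continuity of the norm, or some Fatou-type property on the predual side) that would force the uniform tail smallness you need, and then check whether that hypothesis is automatic for duals or genuinely extra.
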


If it helps to prove the conjecture, one can assume that $X$ is a ``reasonable'' space: for example that $C_0^\infty$ is dense in $X$ and/or $X^*$, etc. The result under these (or similar) additional assumptions will still be extremely interesting.

\newpage

\end{document}